\newtheorem{assumption}[theorem]{Assumption}
\newcommand{\eps}{\varepsilon}
\newcommand{\one}{\mathbf{1}}
\renewcommand{\hat}{\widehat}
\DeclareMathOperator*{\argmin}{arg\,min}
\DeclareMathOperator*{\Argmin}{Arg\,min}
\DeclareMathOperator{\image}{Im}
\DeclareMathOperator{\prox}{prox}
\DeclareMathOperator{\proxv}{proxv}
\DeclareMathOperator{\col}{col}
\DeclareMathOperator{\diag}{diag}
\DeclareMathOperator{\sign}{sign}
\newcommand{\R}{\mathbb{R}}
\newcommand{\mA}{{\bf A}}
\newcommand{\mI}{{\bf I}}
\newcommand{\mQ}{{\bf Q}}
\newcommand{\mW}{{\bf W}}
\newcommand{\cE}{{\mathcal{E}}}
\newcommand{\cG}{{\mathcal{G}}}
\newcommand{\cU}{{\mathcal{U}}}
\newcommand{\cV}{{\mathcal{V}}}
\newcommand{\bb}{{\bf b}}
\newcommand{\bu}{{\bf u}}
\newcommand{\bx}{{\bf x}}
\newcommand{\by}{{\bf y}}
\newcommand{\bz}{{\bf z}}
\newcommand{\norm}[1]{\left\| #1 \right\|}
\newcommand{\angles}[1]{\left\langle #1 \right\rangle}
\newcommand{\cbraces}[1]{\left( #1 \right)}
\newcommand{\sbraces}[1]{\left[ #1 \right]}
\newcommand{\braces}[1]{\left\{ #1 \right\}}
\def\<#1,#2>{\langle #1,#2\rangle}
\newcommand{\lmax}{\lambda_{\max}}
\newcommand{\smax}{\sigma_{\max}}
\newcommand{\sminp}{\sigma_{\min^+}}
\newcommand{\lminp}{\lambda_{\min^+}}
\title{Dual Smoothing for Decentralized Optimization}
\date{}
\author{
	Alexander Rogozin\orcidID{0000-0003-3435-2680} \and
	Nhat Trung Nguyen\orcidID{} \and
	Hamed Azami Zenuzagh\orcidID{} \and
	Alexander Gasnikov\orcidID{0000-0002-7386-039X}
}
\authorrunning{A. Rogozin et al.}
\institute{Moscow Institute of Physics and Technology}
\begin{document}
\maketitle

\begin{abstract}

Decentralized optimization is widely used in different fields of study such as distributed learning, signal processing, and various distributed control problems. In these types of problems, nodes of the network are connected to each other and seek to optimize some objective function. In this article, we present a method for smoothing the non-smooth and non-strongly convex problems. This is done using the dual smoothing technique. We study two types of problems: consensus optimization of linear models and coupled constraints optimization. It is shown that these two problem classes are dual to each other.

\end{abstract}
\keywords{Convex optimization \and Decentralized optimization \and Coupled constraints \and Lagrangian multipliers method}





\section{Introduction}

Decentralized optimization is used in multi-agent systems control \cite{ren2006consensus,ren2008distributed}, large-scale data processing \cite{konecny2016federated}, power systems control \cite{gan2012optimal,ram2009distributed}. Is it assumed that no centralized server is present and therefore the nodes communicate only to their immediate neighbors in the network, which is represented by a connected and undirected graph $\cG = (\cV, \cE)$. Each node locally holds an objective function and can perform local computations. We study two types of problems given below.
\begin{align}\tag{Con}\label{prob:consensus_intro}
	\begin{aligned}
	\min_{x_1\in Q, \ldots, x_n\in Q}~ &\sum_{i=1}^n f_i(A_i x_i - b_i) \\
	\text{s.t. } &x_1 = \ldots = x_n
	\end{aligned}
\end{align}
\begin{align}\tag{Coupl}\label{prob:coupled_intro}
	\begin{aligned}
		\min_{x_1\in Q_1, \ldots, x_n\in Q_n}~ &\sum_{i=1}^n f_i(x_i) \\
		\text{s.t. } &\sum_{i=1}^n (A_i x_i - b_i) = 0
	\end{aligned}
\end{align}
Here $f_i$ are convex functions locally held by the nodes, $x_i$ are local variables and $Q, Q_1, \ldots, Q_n$ are closed convex sets. Problem \eqref{prob:consensus_intro} is consensus optimization of linear models and \eqref{prob:coupled_intro} is optimization with coupled constraints. In \eqref{prob:consensus_intro}, each agent locally stores data $(A_i, b_i)$ and a loss function $f_i$. The nodes collectively solve the optimization problem while keeping their local trajectories close to consensus, i.e. the constraint $x_1 = \ldots = x_n$. In problem \eqref{prob:coupled_intro}, the agents' local variables are tied by an affine constraint, but the constraint itself is stored in a distributed way between the nodes. Problem \eqref{prob:consensus_intro} can be stated as a special case of \eqref{prob:coupled_intro} if matrices $A_i$ are taken as slices of the graph Laplacian matrix \cite{yarmoshik2024coupled}. Moreover, problems \eqref{prob:consensus_intro} and \eqref{prob:coupled_intro} are dual to each other up to redefinition of $f_i$ and transposition of $A_i$. In this work we focus on convex nonsmooth objectives. Let us discuss several examples of problems \eqref{prob:consensus_intro} and \eqref{prob:coupled_intro}.

\vspace{0.3cm}
\noindent\textbf{Decentralized mean absolute error optimization}. Consider a special case of \eqref{prob:consensus_intro} with $f_i(y_i) = \norm{y_i}_1$.
\begin{align*}
	\min_{x_1\in Q, \ldots, x_n\in Q}~ &\sum_{i=1}^n \norm{A_i x_i - b_i}_1 \\
	\text{s.t. } &x_1 = \ldots = x_n
\end{align*}
Set $Q$ may be either a unit simplex or the whole space $\R^d$, where $d$ is the dimension of $x_1, \ldots, x_n$. Moreover, we may use Huber smoothing instead of 1-norm, i.e. set
\begin{align*}
	f_i(y) = [g(y_1) \ldots g(y_n)]^\top, \text{ where } g(y_i) =
	\begin{cases}
		\frac{y_i^2}{2},~ |y_i| < 1, \\ |y_i| - \frac12,~ |y_i|\geq 1.
	\end{cases}
\end{align*}

\vspace{0.3cm}
\noindent\textbf{Decentralized basis pursuit}. This problem is seeking the solution of a linear system with the least 1-norm. It can be seen as a special case of \eqref{prob:coupled_intro} with $f_i(y_i) = \norm{y_i}_1$.
\begin{align*}
	\min_{x_1\in Q_1, \ldots, x_n\in Q_n}~ &\sum_{i=1}^n \norm{x_i}_1 \\
	\text{s.t. } &\sum_{i=1}^n (A_i x_i - b_i) = 0
\end{align*}
Of our interest is only the case $Q_ 1 = \ldots = Q_n = \R^d$.

\vspace{0.3cm}
\noindent\textbf{Related work}. Decentralized consensus optimization (i.e. problem \eqref{prob:consensus_intro} with $A_i$ identity matrices) can be called a theoretically well-studied field for synchronous first-order methods (i.e. methods requiring only gradient information and performing synchronous communication rounds). Decentralized methods that we consider operate two types of steps -- (synchronous) communication rounds and local gradient updates. The number of communication rounds required to achieve the given accuracy is called communication complexity, and the corresponding number of gradient steps is named computational complexity. For $\mu$-strongly convex objectives with $L$-Lipschitz gradient and a static network with condition number $\chi$, primal \cite{kovalev2020optimal} and dual \cite{scaman2017optimal} methods are known that achieve communication complexity $O\cbraces{\sqrt{\chi L/\mu}\ln(1/\eps)}$, which is also shown to match the lower complexity bound \cite{scaman2017optimal}. For time-varying graphs with worst-case condition number $\chi$, methods requiring $O(\chi\sqrt{L/\mu}\ln(1/\eps))$ communication rounds are also known \cite{kovalev2021lower,kovalev2021adom,li2021accelerated}. These algorithms are also theoretically optimal, i.e. matching the lower complexity bound derived in \cite{kovalev2021lower}. For nonsmooth convex problems, where the gradient norm is uniformly bounded by $M$, a penalty method with sliding technique \cite{lan2016gradient} was applied to achieve communication complexity $O(\sqrt\chi MR/\eps)$ \cite{scaman2018optimal,uribe2017optimal}, which is an optimal complexity. For time-varying networks, optimal algorithms were proposed with complexity $O(\chi MR/\eps)$ for convex setup and $O(\chi M/\sqrt{\mu\eps})$ for $\mu$-strongly convex setup.

Coupled constraints optimization, i.e. problem \eqref{prob:coupled_intro}, is not studied as extensively as consensus optimization. Prior to first-order methods, coupled constraints problem was solved by ADMM approaches \cite{chang2016proximal,falsone2020tracking,li2018accelerated,wu2022distributed}. Gradient methods were used in \cite{doan2017distributed}, and in \cite{yarmoshik2024coupled} a first-order method optimal for $L$-smooth $\mu$-strongly convex objectives was proposed. It has communication complexity $O(\sqrt{\chi\kappa_A L/\mu}\ln(1/\eps))$, where $\kappa_A$ is related to condition numbers of the constraint matrices.

\vspace{0.3cm}
\noindent\textbf{Paper contribution}. In this work, we show how dual smoothing technique \cite{nesterov2005smooth} can be applied to consensus optimization of linear models \eqref{prob:consensus_intro} and to coupled constraints optimization \eqref{prob:coupled_intro}. The idea of dual smoothing is based on the fact that a Fenchel conjugate of a $\mu$-strongly convex function is $1/\mu$-smooth \cite{kakade2009duality}. Therefore, we can regularize a problem, take its dual and get a smooth problem. We show the transition between problems \eqref{prob:consensus_intro} and \eqref{prob:coupled_intro} and their regularized versions via duality.

\vspace{0.3cm}
\noindent\textbf{Notation}. We let $\mathbb{R}^d$ the Euclidean $d$-dimension space. We let $\col(x_1, \ldots, x_n) = (x_1^\top \ldots x_n^\top)^\top$ be a column-stacked vector. We denote by $m$ the dimension of the stacked vector, i.e. $m = nd$ or $m = d_1 + \ldots + d_n$ depending on the context. Maximal eigenvalue and singular value of matrix $C$ are denoted as $\lmax(C)$ and $\smax(C)$, respectively. Minimal nonzero eigenvalue and singular value of $M$ are denoted $\lminp(C), \sminp(C)$, respectively. The network is represented by a connected undirected graph $\cG = (\cV, \cE)$ with $|\cV| = n$ nodes and $|\cE| = \ell$ edges. Given a closed convex function $h$, closed convex set $Q$ and a scalar $\lambda > 0$, prox operator is defined as $\prox_{\lambda h}^Q(x) = \argmin_{y\in Q}\sbraces{\lambda h(y) + \frac12\norm{y - x}_2^2}$. We also introduce <<prox value>> as $\proxv_{\lambda h}^Q(x) = \min_{y\in Q}\sbraces{\lambda h(y) + \frac12\norm{y - x}_2^2}$. A Kronecker product of matrices $B$ and $C$ is denoted $B\otimes C$.

 Set $\mQ$ is defined either as $\mQ = Q_1\times \ldots \times Q_n$ or $\mQ = Q^n$ depending on the context. 
 For a vector $x\in\R^d$, $\norm{x}_p$ denotes its $p$-norm for $p\geq 0$. After that, $\one_n$ is a vector of length $n$ such that all of its components are one and $e_i$ is the $i$-th coordinate vector the dimension of which is clear from the context. 

Also let $\text{dist}(x, S)$ denote the Euclidean distance from point $x$ to set $S$. Let $B_\infty(x, R) = \braces{y:~ \norm{y - x}_\infty\leq R}$ denote the ball in sup-norm with center at $x$ and radius $R$. The indicator function for set $S$ is defined as $\mathbb{I}(x) = \braces{0 \text{ if } x\in S;~ +\infty \text{ if } x\notin S}$.

Generally, the vectors are denoted in lower case and matrices in upper case. Unless otherwise stated, we write $\bx = \col(x_1, \ldots, x_n)$, $\bb = \col(b_1, \ldots, b_n)$ and $\mA = \diag(A_1, \ldots, A_n)$.

\vspace{0.3cm}
\noindent\textbf{Paper organization}

In \Cref{sec:problem_statement} we introduce the problems of interest and deduce their duals. We continue in \Cref{sec:algorithms} with methods and complexity bounds. Concluding remarks are provided in \Cref{sec:conclusion}.

\section{Problem statement and assumptions}\label{sec:problem_statement}

Firstly, we make a standard convexity assumption for optimization and recall the basic definitions.
\begin{definition}
	Let $Q\subseteq\R^d$ be a closed convex set. Function $h: Q\to\R$ is $\mu$-strongly convex, where $\mu > 0$, if for any $x, y\in Q$ it holds
	\begin{align*}
		h(y)\geq h(x) + \angles{\nabla h(x), y - x} + \frac{\mu}{2}\norm{y - x}_2^2.
	\end{align*}
	If $\mu$ is put $\mu = 0$ in the equation above, the function is called just convex.
\end{definition}
\begin{definition}
	A function $h: Q\to\R^n$ is called $L$-smooth for some $L\geq 0$ if for any $x, y\in Q$ it holds
	\begin{align*}
		h(y)\leq h(x) + \angles{\nabla h(x), y - x} + \frac L2 \norm{y - x}_2^2.
	\end{align*}
\end{definition}
\begin{assumption}\label{assum:f_i_convex}
	Functions $f_i$ are convex.
\end{assumption}
We also introduce a gossip matrix that is widely used in decentralized optimization.
\begin{assumption}\label{assum:gossip_matrix}
	Gossip matrix $W\in\R^{n\times n}$ associated with graph $\cG$ has the following properties.\\
	1. $W$ is symmetric positive semi-definite. \\
	2. $W$ is network compatible, i.e. $W_{ij} = 0$ if $(i, j)\notin\cE$.\\
	3. $W x = 0$ if and only if $x_1 = \ldots = x_n$.
\end{assumption}
An example of gossip matrix is the graph Laplacian defined as $L = D - A$, where $D$ is a diagonal matrix holding node degrees and $A$ is the graph adjacency matrix. To equivalently formulate consensus constraints for vectors, we use $\mW = W\otimes \mI_d$. For vector $\bx = \col(x_1, \ldots, x_n)$, where $x_i\in\R^d,~ i = 1, \ldots, n$ we have $W\bx = 0$ if and only if $x_1 = \ldots = x_n$.

After that, we introduce spectral properties for matrices $W$ and $A_1, \ldots, A_n$. For gossip matrix $W$, we let
\begin{align}\label{eq:def_kappa_w}
	L_W = \lmax(W),~ \mu_W = \lminp(W),~ \kappa_W = \frac{L_W}{\mu_W}.
\end{align}
For matrices $A_1, \ldots, A_n$ we define
\begin{align}\label{eq:def_kappa_a}
	L_A = \max_{i=1,\ldots,n} \lmax(A_i A_i^\top),~~ \mu_A = \lminp\cbraces{\frac1n\sum_{i=1}^n A_i A_i^\top},~~ \kappa_A = \frac{L_A}{\mu_A}.
\end{align}
Note that $\mu_A$may differ from minimal positive eigenvalues of each $A_i$ separately.


Let each node hold regularizers $g_i$ and $h_i$ along with objective function $f_i$. Introduce $F(\bx) = \sum_{i=1}^n f_i(x_i),~ G(\bx) = \sum_{i=1}^n g_i(x_i),~ H(\bx) = \sum_{i=1}^n h_i(x_i)$. We now define regularized versions for consensus and coupled constraints problems. Given a gossip matrix, we rewrite problems \eqref{prob:consensus_intro} and \eqref{prob:coupled_intro} the following way.

\begin{minipage}{0.43\textwidth}
	\begin{align}\tag{ConR}\label{prob:consensus_reg}
		\hspace{-0.17\textwidth}\begin{aligned}
			\min_{\bx\in\mQ}~ &F(\by) + \lambda G(\by) + \mu H(\bx) \\
			\text{s.t. } &\mW\bx = 0,~ \by = \mA\bx - \bb \\
		\end{aligned}
	\end{align}
\end{minipage}
\hspace{0.03\textwidth}
\begin{minipage}{0.48\textwidth}
	\begin{align}\tag{CouplR}\label{prob:coupled_reg}
		\hspace{-0.07\textwidth}
		\begin{aligned}
			\min_{\bx\in\mQ}~ &F(\bx) + \lambda G(\bx) + \mu H(\by) \\
			\text{s.t. } &\mA\bx+ \mW\by = \bb
		\end{aligned}
	\end{align}
\end{minipage}

\vspace{0.3cm}
We now pass to dual problem formulation. Let us introduce a conjugate function w.r.t. a regularizer.
\begin{definition}
	For given convex functions $\varphi$ and $\psi$, closed convex set $S$ and scalar $\gamma > 0$, we define 
	$$\varphi_{\gamma\psi,S}^*(v) = \max_{u\in S} \cbraces{\angles{u, v} - \varphi(u) - \frac\gamma2\psi(u)}.$$
	If $\psi\equiv 0$, we write
	\begin{align*}
		\varphi_{S}^*(v) = \max_{u\in S} \cbraces{\angles{u, v} - \varphi(u)}.
	\end{align*}
\end{definition}

\begin{lemma}\label{lem:dual_problems} Dual problems to \eqref{prob:consensus_reg}, \eqref{prob:coupled_reg_dual} write as

\begin{minipage}{0.42\textwidth}
	\begin{align}\tag{ConRD}\label{prob:consensus_reg_dual}
		\hspace{-0.17\textwidth}
		\begin{aligned}
			\min_{\bz, \bu}~ &F^*_{\lambda G,\R^m}(\bz) + \angles{\bz, \bb} \\
			\text{s.t. } &\mW\bu+ \mA^\top\bz = 0
		\end{aligned}
	\end{align}
\end{minipage}
\hspace{0.05\textwidth}
\begin{minipage}{0.45\textwidth}
	\begin{align}\tag{CouplRD}\label{prob:coupled_reg_dual}
		\hspace{-0.17\textwidth}\begin{aligned}
			\min_{\bz}~ &F^*_{\lambda G,\mQ}(\mA^\top \bz) - \angles{\bz, \bb} \\
			\text{s.t. } &\mW\bz = 0
		\end{aligned}
	\end{align}
\end{minipage}
\end{lemma}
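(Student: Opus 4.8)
The plan is to obtain each dual by ordinary Lagrangian duality: attach one multiplier to every affine equality constraint of the regularized primal, minimize the Lagrangian over the primal variables, and read off the result. The two dual variables of \eqref{prob:consensus_reg} end up coupled precisely because the variable $\bx$ enters both primal constraints. I would carry the computation out in full for \eqref{prob:consensus_reg}$\to$\eqref{prob:consensus_reg_dual} and then indicate the completely analogous step for \eqref{prob:coupled_reg}$\to$\eqref{prob:coupled_reg_dual}, which only exchanges $\mA$ with $\mA^\top$ and the roles of the consensus and coupling blocks.

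For \eqref{prob:consensus_reg}, attach a multiplier $\bu$ to $\mW\bx=0$ and $\bz$ to $\by=\mA\bx-\bb$ and write
\[
\cL(\bx,\by,\bu,\bz)=F(\by)+\lambda G(\by)+\mu H(\bx)+\angles{\bu,\mW\bx}+\angles{\bz,\mA\bx-\bb-\by},\qquad \bx\in\mQ,\ \by\in\R^m .
\]
The dual function $g(\bu,\bz)=\min_{\bx\in\mQ,\,\by}\cL$ splits into two independent minimizations. Over $\by$ one recognizes $\min_{\by}\sbraces{F(\by)+\lambda G(\by)-\angles{\bz,\by}}=-F^*_{\lambda G,\R^m}(\bz)$ straight from the definition of the conjugate with respect to a regularizer. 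Over $\bx$ one is left with $\min_{\bx\in\mQ}\sbraces{\mu H(\bx)+\angles{\mW\bu+\mA^\top\bz,\bx}}$, a conjugate-type term in $\mW\bu+\mA^\top\bz$; when no curvature is imposed on $\bx$ and $\mQ$ is the whole space (the regime of \eqref{prob:consensus_reg_dual}) this equals $0$ on the affine subspace $\braces{\mW\bu+\mA^\top\bz=0}$ and $-\infty$ off it. Collecting the constant $-\angles{\bz,\bb}$ gives $g(\bu,\bz)=-F^*_{\lambda G,\R^m}(\bz)-\angles{\bz,\bb}$ on that subspace and $-\infty$ elsewhere, so $\max_{\bu,\bz}g$ is exactly \eqref{prob:consensus_reg_dual}. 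For \eqref{prob:coupled_reg} the same recipe with a single multiplier $\bz$ for $\mA\bx+\mW\by=\bb$ yields $-F^*_{\lambda G,\mQ}(-\mA^\top\bz)$ from the minimization over $\bx\in\mQ$, the constraint $\mW\bz=0$ from the minimization over the free $\by$, and the harmless substitution $\bz\mapsto-\bz$ brings it to the form \eqref{prob:coupled_reg_dual}.

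To justify that these really are the duals and, for later use, that there is no duality gap, note that $F$, $G$, $H$ are convex by \Cref{assum:f_i_convex} together with convexity of the chosen regularizers, and every coupling constraint is affine; strong duality then holds under a standard constraint qualification (relative-interior feasibility), which is automatic in all the cases considered here, where $\mQ$ is a simplex or the full space. One may also record that the conjugates $F^*_{\lambda G,\cdot}$ are proper (immediate from convexity), and, when attainment of the inner maxima is needed, that $G$ is coercive.

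I expect the only real difficulty to be bookkeeping rather than anything conceptual: keeping the multiplier signs consistent through the Fenchel steps (the conjugate naturally produces $-\mA^\top\bz$ and $-\bb$, whereas \eqref{prob:coupled_reg_dual} is written with $+\mA^\top\bz$ as the argument of $F^*$, reconciled by relabeling $\bz\mapsto-\bz$), and correctly spotting which of the two inner minimizations collapses into a linear constraint and which contributes the smooth conjugate term. The decoupling of the Lagrangian minimization and the appeal to strong duality are routine.
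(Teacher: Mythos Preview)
Your proposal is correct and follows essentially the same route as the paper: form the Lagrangian, separate the minimization over $\bx$ and $\by$, recognize one block as the regularized conjugate $F^*_{\lambda G,\cdot}$ and the other as an indicator of an affine set. The paper's proof is in fact terser and carries a couple of typos (swapped labels, a stray $G(\bx)$ for $G(\by)$), whereas you are more explicit about the sign relabeling $\bz\mapsto-\bz$ in the coupled case and about the need to drop the $\mu H$ term and take $\mQ=\R^m$ for the $\bx$-block to collapse to the constraint $\mW\bu+\mA^\top\bz=0$; your added remark on strong duality is not in the paper but does no harm.
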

\begin{proof}
	Let us deduce a dual function to \eqref{prob:consensus_reg} up to a sign.
	\begin{align*}
		-&\min_{\bx\in\mQ, \by} [F(\bx) + \lambda G(\bx) - \angles{\bz, \mA\bx+ \mW\by - \bb}] \\
		&= \max_{\bx\in\mQ} [\angles{\mA^\top\bz, \bx} - F(\bx) - \lambda G(\bx)] + \max_{\by}[\angles{\mW\bz, \by}] - \angles{\bz, \bb}\\
		&= F_{\lambda G, \mQ}^* (\mA^\top\bz) - \angles{\bz, \bb} + \mathbb{I}(\mW\bz = 0).
	\end{align*}
	Analogously for \eqref{prob:coupled_reg} we get
	\begin{align*}
		-&\min_{\bx\in\mQ, y} [F(\by) + \lambda G(\bx) - \angles{\bz, \by - \mA\bx + \bb} + \angles{\bu, \mW\bx}] \\
		&=\max_{\by}[\angles{\bz, \by} - F(\by) - \lambda G(\by)] + \max_{\bx\in\mQ}[\angles{\bx, -\mW\bu - \mA^\top\bz}] + \angles{\bz, \bb} \\
		&=F_{\lambda G, \R^m}^*(\bz) + \angles{\bz, \bb} + \mathbb{I}(-\mW\bu - \mA^\top\bz = 0)
	\end{align*}
\end{proof}
Lemma \ref{lem:dual_problems} shows that consensus optimization and coupled constraints optimization problems are dual to each other up to the usage of $\mA^\top$ instead of $\mA$ and factors $\angles{\bz, \bb}$ and $\bb$. 


\section{Algorithms and complexity}\label{sec:algorithms}

\subsection{Preliminaries and base algorithm}

Decentralized optimization algorithms that we will use are based on optimization methods for affinely constrained problems. We recall the basic method APAPC \cite{kovalev2020optimal,salim2022optimal} which is a state-of-the-art method for this class of problems. We are interested in problem statement
\begin{align}\label{prob:general_affine_constraints}
	\min_{u\in\cU} P(u) \qquad \text{ s.t. } K u = c.
\end{align}

\begin{algorithm}[H]
	\caption{APAPC}
	\label{alg:apapc}
	\begin{algorithmic}[1]
		\STATE {\bf Parameters:}  
		$u^0 \in \cU$
		$\eta,\theta,\alpha>0$, $\tau \in (0,1)$
		\STATE Set $u_f^0 = u^0$, $z^0 = 0 \in \cU$
		\FOR{$k=0,1,2,\ldots$}{}
		\STATE $u_g^k = \tau u^k +  (1-\tau)u_f^k$\label{alg:apapc:line:x:1}
		\STATE $u^{k+\frac{1}{2}} = (1+\eta\alpha)^{-1}(u^k - \eta (\nabla P(u_g^k) - \alpha  u_g^k + z^k))$\label{alg:apapc:line:x:2}
		\STATE $z^{k+1} = z^k  + \theta K^\top (K u^{k+\frac{1}{2}} - c)$ \label{alg:apapc:line:z}
		\STATE $u^{k+1} = (1+\eta\alpha)^{-1}(u^k - \eta (\nabla P(u_g^k) - \alpha  u_g^k + z^{k+1}))$\label{alg:apapc:line:x:3}
		\STATE $u_f^{k+1} = u_g^k + \tfrac{2\tau}{2-\tau}(u^{k+1} - u^k)$\label{alg:apapc:line:x:4}
		\ENDFOR
	\end{algorithmic}
\end{algorithm}

Algorithm \ref{alg:apapc} has an optimal linear convergence rate, which we recall below.
\begin{proposition}[\cite{salim2022optimal}, Proposition 1]\label{prop:apapc}
	Assume that $c \in \image K$ and put\\ $\kappa_K = \frac{\lmax(K^\top K)}{\lminp(K^\top K)}$.
	Also assume that the function $P$ is $L_P$-smooth and $\mu_P$-strongly convex.
	Set the parameter values of \ref{alg:apapc} as $\tau = \min\braces{1, \frac12\sqrt\frac{\kappa_K}{\kappa_P}}$, $\eta= \frac1{4\tau L_P}$, $\theta = \frac1{\eta L_K}$ and $\alpha = \mu_P$ and let $u^*$ be the solution of \eqref{prob:general_affine_constraints}. Then in order to yield $u^N$ such that $\norm{u^N - u^*}\leq\eps$, Algorithm \ref{alg:apapc} requires $O(\sqrt{\kappa_P \kappa_K} \ln(1/\eps))$ communication rounds.
\end{proposition}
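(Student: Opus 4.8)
This statement is Proposition~1 of \cite{salim2022optimal}, and I would prove it by the standard primal--dual Lyapunov argument for accelerated methods on affinely constrained problems. The plan is to pass to the Lagrangian saddle-point reformulation of \eqref{prob:general_affine_constraints}, recognize \ref{alg:apapc} as an accelerated primal--dual scheme in a transformed dual variable, and exhibit a potential function that contracts geometrically at rate $\Theta(1/\sqrt{\kappa_P\kappa_K})$. First, since $c\in\image K$, strong duality holds and \eqref{prob:general_affine_constraints} is equivalent to $\min_{u\in\cU}\max_\lambda\cbraces{P(u) + \angles{\lambda, Ku - c}}$, which has a saddle point. Writing the multiplier in the transformed form $z = K^\top\lambda$, the primal objective at fixed $z$ becomes $u\mapsto P(u) + \angles{z, u}$, and the algorithm manipulates only $z$, which lives in the primal space. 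Because $z^0 = 0$ and line~\ref{alg:apapc:line:z} increments $z^k$ by a vector in $\image K^\top = \image(K^\top K)$, all iterates $z^k$ (and the relevant $z^*$) remain in this subspace, on which the spectral bounds involving $\lmax(K^\top K)$ and $\lminp(K^\top K)$, hence $\kappa_K$, are effective.

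Next I would identify the updates: line~\ref{alg:apapc:line:x:1} is a Nesterov-type extrapolation; lines~\ref{alg:apapc:line:x:2} and~\ref{alg:apapc:line:x:3} are Euclidean proximal-gradient steps on $u\mapsto P(u) + \angles{z, u}$ (with $z^k$, respectively $z^{k+1}$) carrying the strong-convexity correction $-\alpha u_g^k$ with $\alpha = \mu_P$; line~\ref{alg:apapc:line:z} is the dual step $z^{k+1} = z^k + \theta K^\top(Ku^{k+\frac12} - c)$; and line~\ref{alg:apapc:line:x:4} forms the iterate $u_f^{k+1}$ that tracks the solution. The heart of the proof is then to introduce a potential of the form
\begin{align*}
	\Psi_k = a\cbraces{P(u_f^k) - P(u^*) + \angles{z^*, u_f^k - u^*}} + b\,\sqn{u^k - u^*} + c_0\norm{z^k - z^*}_{(K^\top K)^\dagger}^2
\end{align*}
with suitable positive constants $a, b, c_0$, and to show $\Psi_{k+1}\le(1 - \rho)\Psi_k$ with $\rho = \Theta(1/\sqrt{\kappa_P\kappa_K})$ for the stated choices $\tau = \min\braces{1, \tfrac12\sqrt{\kappa_K/\kappa_P}}$, $\eta = 1/(4\tau L_P)$, $\theta = 1/(\eta L_K)$, $\alpha = \mu_P$.

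The contraction is obtained by combining: (i) $L_P$-smoothness of $P$ to bound the progress of the primal step; (ii) $\mu_P$-strong convexity of $P$ to get linear contraction of $\sqn{u^k - u^*}$; (iii) the spectral bounds from the previous step to control the dual term through the step size $\theta$; (iv) the three-point identity $\angles{x - y, x - w} = \tfrac12(\sqn{x - y} + \sqn{x - w} - \sqn{y - w})$ applied to the proximal updates; and (v) Young's inequality to absorb the cross terms between the primal, dual and function-value components. Unrolling then gives $\sqn{u^N - u^*}\le\Psi_N/b\le(1 - \rho)^N\Psi_0/b$, so $N = O(\sqrt{\kappa_P\kappa_K}\ln(1/\eps))$ iterations suffice to reach $\norm{u^N - u^*}\le\eps$; since each iteration performs exactly one multiplication by $K^\top K$, which is one communication round in the decentralized realization, the communication complexity is $O(\sqrt{\kappa_P\kappa_K}\ln(1/\eps))$.

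The main obstacle is the bookkeeping in the contraction step: one must choose $a, b, c_0$ jointly with $\tau, \eta, \theta, \alpha$ so that every cross term cancels or is dominated by the squared-distance and function-gap terms, which is the detailed computation carried out in \cite{salim2022optimal}. A secondary technical subtlety is working consistently with the seminorm $\norm{\cdot}_{(K^\top K)^\dagger}$ on the subspace $\image(K^\top K)$ --- this is precisely what ties the dual component of $\Psi_k$ to $\lmax(K^\top K)$ and $\lminp(K^\top K)$, and hence produces the factor $\sqrt{\kappa_K}$ rather than something depending on individual constraint coordinates.
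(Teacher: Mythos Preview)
The paper does not prove this proposition at all: it is quoted verbatim as Proposition~1 of \cite{salim2022optimal} and used as a black box, with no argument given. There is therefore nothing in the present paper to compare your proposal against. Your sketch is a reasonable high-level outline of the Lyapunov/potential-function argument that \cite{salim2022optimal} carries out, but for the purposes of this paper the correct ``proof'' is simply the citation.
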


\Cref{alg:apapc} was applied to consensus optimization in \cite{kovalev2020optimal} and to coupled constraints optimization in \cite{yarmoshik2024coupled}.

\Cref{prop:apapc} is formulated for strongly convex smooth objectives. In order to use this result for non-strongly convex functions, we use a regularization technique.

\begin{lemma} \label{lem:regularization_affine}
	Let $h: \cU \to \mathbb{R}$ be convex and $L$-smooth function and introduce regularized function $h_{\mu}(x) = h(x) + \frac{\mu}{2} \|x^0 - x\|^2$. Suppose that there exists solution $x^*\in \Argmin\limits_{x: Kx = c} h(x)$ and $x_{\mu}^* =  \argmin\limits_{x: Kx = c} h_{\mu}(x)$. Define $M_h = h(x^*) - \min\limits_{x} h(x)$. Assume that $\|x^0 - x^*\|^2 \leq R^2$. If regularization parameter is set to $\mu = \eps/R^2$ and regularized problem is solved up to accuracy $\delta = O(\eps^2)$, i.e. some method yields $\hat x$ such that $\norm{\hat x - x_\mu^*}_2^2\leq \delta$, then $h(\hat x) - h(x^*)\leq \eps$.
\end{lemma}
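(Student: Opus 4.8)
The plan is to decompose the target error as
\[
h(\hat x) - h(x^*) \;=\; \underbrace{\bigl(h(x_\mu^*) - h(x^*)\bigr)}_{\text{regularization bias}} \;+\; \underbrace{\bigl(h(\hat x) - h(x_\mu^*)\bigr)}_{\text{inexactness error}},
\]
and to show that, with the stated choices of $\mu$ and $\delta$, each summand is at most $\eps/2$.

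\emph{Regularization bias.} Since $x_\mu^*$ minimizes $h_\mu$ over $\{x : Kx = c\}$ and $x^*$ lies in this set, $h_\mu(x_\mu^*) \le h_\mu(x^*)$. Expanding the definition of $h_\mu$ and using $\|x^0 - x^*\|_2^2 \le R^2$ gives
\[
h(x_\mu^*) + \tfrac\mu2\|x^0 - x_\mu^*\|_2^2 \;\le\; h(x^*) + \tfrac\mu2 R^2 .
\]
Dropping the nonnegative term on the left and substituting $\mu = \eps/R^2$ yields both $h(x_\mu^*) - h(x^*) \le \tfrac\mu2 R^2 = \tfrac\eps2$ and the auxiliary bound $h(x_\mu^*) \le h(x^*) + \tfrac\eps2$, which I reuse below.

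\emph{Inexactness error.} Here the point to be careful about is that $\hat x$ need not be feasible, so I cannot compare it to $x_\mu^*$ via (strong) convexity of $h_\mu$; instead I route the estimate through $L$-smoothness of $h$. For any point, $L$-smoothness gives $h(\hat x) \le h(x_\mu^*) + \langle \nabla h(x_\mu^*), \hat x - x_\mu^*\rangle + \tfrac L2\|\hat x - x_\mu^*\|_2^2$, hence by Cauchy--Schwarz and $\|\hat x - x_\mu^*\|_2^2 \le \delta$,
\[
h(\hat x) - h(x_\mu^*) \;\le\; \|\nabla h(x_\mu^*)\|_2\,\sqrt\delta + \tfrac L2\delta .
\]
To bound $\|\nabla h(x_\mu^*)\|_2$ I use the self-bounding inequality for convex $L$-smooth functions, $\|\nabla h(y)\|_2^2 \le 2L\bigl(h(y) - \min_x h(x)\bigr)$, applied at $y = x_\mu^*$; combined with $h(x_\mu^*) \le h(x^*) + \tfrac\eps2$ and the definition $M_h = h(x^*) - \min_x h(x)$ this gives $\|\nabla h(x_\mu^*)\|_2 \le \sqrt{2L(M_h + \eps/2)}$. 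Therefore the inexactness error is at most $\sqrt{2L(M_h+\eps/2)}\,\sqrt\delta + \tfrac L2\delta$, and choosing $\delta$ a sufficiently small multiple of $\eps^2$, with the constant depending on $L$ and $M_h$ (and, say, $\eps \le 1$), makes this $\le \eps/2$. This is precisely the quantitative content of the hypothesis $\delta = O(\eps^2)$ and the reason $M_h$ enters the statement. Adding the two $\eps/2$ bounds gives $h(\hat x) - h(x^*) \le \eps$.

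The only genuinely nontrivial step is the second one: the estimate must not presuppose feasibility of $\hat x$, which is why it goes through smoothness plus the gradient bound at the regularized minimizer rather than through strong convexity of $h_\mu$. The self-bounding inequality used there needs $h$ convex, $L$-smooth and bounded below (so that $M_h < \infty$), which is exactly what is assumed; everything else reduces to elementary algebra once $\mu = \eps/R^2$ and $\delta = O(\eps^2)$ are plugged in.
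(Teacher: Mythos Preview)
Your proof is correct and follows essentially the same route as the paper: both split the error into a regularization-bias term controlled by $\mu R^2/2$ and an inexactness term handled via smoothness, Cauchy--Schwarz, and the self-bounding inequality $\|\nabla f\|^2 \le 2L_f(f - \min f)$. The only cosmetic difference is that the paper routes the inexactness part through $h_\mu$ (using $h(\hat x)\le h_\mu(\hat x)$ and $(L+\mu)$-smoothness of $h_\mu$), whereas you work directly with $h$ and its $L$-smoothness; your version is slightly tidier but otherwise identical in structure.
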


\begin{proof}
	We have
	\begin{align*}
		h&(\hat x) - h(x^*) \\ 
		&\leq h_{\mu}(\hat x) - h_{\mu}(x_{\mu}^*) + h_{\mu}(x_{\mu}^*) - h(x^*) \\ 
		&\leq h_{\mu}(\hat x) - h_{\mu}(x_{\mu}^*) + h_{\mu}(x^*) - h(x^*) \\
		&= h_{\mu}(\hat x) - h_{\mu}(x_{\mu}^*) + \frac{\mu}{2} \|x^0-x^*\|^2 \\
		&\leq \langle \nabla h_\mu(x_\mu^*), x - x_\mu^* \rangle + \frac{L+\mu}{2} \|x-x_\mu^*\|^2 + \frac{\mu}{2} \|x^0-x^*\|^2 \\
		&\leq \| \nabla h_{\mu} (x^*_{\mu}) \| \cdot \|x - x_\mu^*\| + \frac{L+\mu}{2} \|x-x_\mu^*\|^2 + \frac{\mu}{2} \|x^0-x^*\|^2 \\
		&\leq\sqrt{ 2(L+\mu) \left(h_\mu(x^*_\mu) - \min_{x} {h_\mu(x)}\right) \|x - x_\mu^*\|^2} \\
		&\qquad + \frac{L+\mu}{2} \|x-x_\mu^*\|^2 + \frac{\mu}{2} \|x^0-x^*\|^2 \\
		&\leq\sqrt{ 2(L+\mu) \left(h_\mu(x^*) - \min_{x} {h(x)}\right) \|x - x_\mu^*\|^2} \\
		&\qquad + \frac{L+\mu}{2} \|x-x_\mu^*\|^2 + \frac{\mu}{2} \|x^0-x^*\|^2 \\
		&\leq\sqrt{ 2(L+\mu) \left(h(x^*) - \min_{x} h(x) + \frac{\mu}{2} \|x^0-x^*\|^2\right) \|x - x_\mu^*\|^2} \\
		&\qquad + \frac{L+\mu}{2} \|x-x_\mu^*\|^2 + \frac{\mu}{2} \|x^0-x^*\|^2 \\
		&\leq\sqrt{ 2 (L+\mu) \left(M + \frac{\mu R^2}{2}\right)\delta} + \frac{(L+\mu)\delta}{2} + \frac{\mu R^2}{2} \\
		&\leq\sqrt{ 2 (L+\mu) \left(M + \frac{\varepsilon}{2}\right)\delta} + \frac{(L+\mu)\delta}{2} + \frac{\mu R^2}{2} \\
		&\leq \frac{\varepsilon}{64} + \frac{\varepsilon}{4} + \frac{\varepsilon}{2} 
		< \varepsilon
	\end{align*}
\end{proof}
In particular, for non-strongly convex objectives problem \eqref{prob:general_affine_constraints} can be solved using regularization.
\begin{corollary}
	Let assumptions of \Cref{prop:apapc} hold except strong convexity. Then solving regularized problem $\min_{u\in\cU}~ h(u) + \mu\norm{u}_2^2/2~~ \text{s.t. } Ku = c$ requires $O(\sqrt{\kappa_K L_G R^2/\eps} \ln(1/\eps))$ iterations of \Cref{alg:apapc} and yields $\hat u$ such that $\norm{\hat u - u^*}_2^2\leq \eps$.
\end{corollary}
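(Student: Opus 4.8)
The plan is to apply \Cref{prop:apapc} directly to the regularized objective, whose regularization term restores exactly the strong convexity that \Cref{prop:apapc} requires; the distance guarantee is then what Algorithm \ref{alg:apapc} delivers verbatim and needs no separate error analysis. Write $L$ for the smoothness constant of $h$ (denoted $L_G$ in the statement, equal to $L_P$ of \Cref{prop:apapc}) and set $P(u) = h(u) + \tfrac{\mu}{2}\sqn{u}$. Since $h$ is convex and $L$-smooth while $\tfrac{\mu}{2}\sqn{u}$ is $\mu$-strongly convex and $\mu$-smooth, $P$ is $\mu$-strongly convex and $(L+\mu)$-smooth. Hence the regularized problem $\min_{u\in\cU} P(u)$ subject to $Ku=c$ meets every hypothesis of \Cref{prop:apapc} with $\mu_P=\mu$ and smoothness $L+\mu$, so its condition number is $\kappa_P=\tfrac{L+\mu}{\mu}$. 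By strong convexity this problem has a unique solution, which is the point $u^*$ appearing in the statement; in particular $u^*$ is the minimizer of the problem actually being solved, so the target $\sqn{\hat u-u^*}\le\eps$ is a convergence guarantee for \Cref{alg:apapc} and does \emph{not} invoke the Tikhonov bound of \Cref{lem:regularization_affine}.

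First I would apply \Cref{prop:apapc} to $P$: producing $u^N$ with $\norm{u^N-u^*}\le\eps'$ costs $O\cbraces{\sqrt{\kappa_P\kappa_K}\,\ln(1/\eps')}$ iterations. To reach the squared bound $\sqn{u^N-u^*}\le\eps$ it is enough to target distance accuracy $\eps'=\sqrt{\eps}$; since $\ln(1/\sqrt\eps)=\tfrac12\ln(1/\eps)=O(\ln(1/\eps))$, the cost is $O\cbraces{\sqrt{\kappa_P\kappa_K}\,\ln(1/\eps)}$ and the returned $\hat u=u^N$ obeys $\sqn{\hat u-u^*}\le\eps$.

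It remains to express $\kappa_P$ in terms of $\eps$. Taking $\mu=\eps/R^2$, as in \Cref{lem:regularization_affine}, gives $\kappa_P=\tfrac{L+\mu}{\mu}=\tfrac{LR^2}{\eps}+1=O\cbraces{LR^2/\eps}$, so that $\sqrt{\kappa_P\kappa_K}=O\cbraces{\sqrt{\kappa_K LR^2/\eps}}$. Substituting into the iteration count from the previous paragraph yields the claimed complexity $O\cbraces{\sqrt{\kappa_K LR^2/\eps}\,\ln(1/\eps)}$, with $L=L_G$.

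I do not anticipate a real obstacle, as the argument is a bookkeeping specialization of \Cref{prop:apapc}; care is needed only at three places. First, $u^*$ must be read as the unique minimizer of the \emph{regularized} problem, so that the distance bound is precisely what \Cref{alg:apapc} produces; a distance guarantee to a minimizer of the original non-strongly-convex problem would be a different and generally unavailable statement, since that solution set need not be a singleton and no $\sqrt\eps$-type rate holds without a growth condition. Second, one squares the accuracy by running to $\eps'=\sqrt\eps$, which leaves only a constant factor in the logarithm. Third, the regularized smoothness is $L+\mu$ rather than $L$, but with $\mu=\eps/R^2$ the extra $+1$ in $\kappa_P$ is absorbed into $O(\cdot)$ for small $\eps$.
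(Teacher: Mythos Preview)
Your proof is correct and matches the paper's implicit argument: the paper gives no explicit proof of this corollary, but its placement immediately after \Cref{lem:regularization_affine} and its use in the proofs of \Cref{thm:consensus_reg_dual_convergence,thm:coupled_reg_dual_convergence} make clear that the intended argument is precisely to apply \Cref{prop:apapc} to the regularized objective with the choice $\mu=\eps/R^2$ borrowed from \Cref{lem:regularization_affine}, which is exactly what you do. Your care in reading $u^*$ as the minimizer of the \emph{regularized} problem, and in noting that the squared distance target costs only a constant in the logarithm, is more precise than the paper itself.
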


Apart from convergence result for affinely constrained optimization, we recall a property for smoothness of Fenchel conjugate functions.
\begin{proposition}\label{prop:conjugate_smoothness}\cite{kakade2009duality}
	Let $h: Q\to\R^n$ be a $\gamma$-strongly convex function. Then its Fenchel conjugate $h^*$ is $1/\gamma$-smooth.
\end{proposition}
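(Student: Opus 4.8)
The plan is to show that the gradient $\nabla h^{*}$ exists and is $\tfrac1\gamma$-Lipschitz, and then convert this into the quadratic upper bound that defines $\tfrac1\gamma$-smoothness. First I would reduce to the unconstrained situation: replacing $h$ by $\bar h := h + \mathbb{I}(x\in Q)$ produces a closed proper $\gamma$-strongly convex function on all of $\R^{n}$ with $\bar h^{*} = h^{*}$, so it suffices to treat a $\gamma$-strongly convex $h$ defined on the whole space. Since $h$ is $\gamma$-strongly convex and closed, for every $v$ the map $u\mapsto\angles{u,v}-h(u)$ is $\gamma$-strongly concave and coercive, hence attains its maximum at a unique point $u(v):=\argmax_{u}\braces{\angles{u,v}-h(u)}$. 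By a standard convex-analysis fact ($\partial h^{*}(v)$ equals the set of maximizers in the definition of $h^{*}$, which here is the singleton $\braces{u(v)}$), $h^{*}$ is finite and differentiable everywhere with $\nabla h^{*}(v)=u(v)$, and the first-order optimality condition for the maximization reads $v\in\partial h(u(v))$.

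The core of the argument is a monotonicity inequality. Fix $v_{1},v_{2}$ and set $u_{1}=u(v_{1})$, $u_{2}=u(v_{2})$, so $v_{1}\in\partial h(u_{1})$ and $v_{2}\in\partial h(u_{2})$. Writing the $\gamma$-strong convexity inequality for $h$ at $(u_{1},u_{2})$ with subgradient $v_{1}$, and at $(u_{2},u_{1})$ with subgradient $v_{2}$, and adding the two, I obtain
\begin{align*}
	\angles{v_{1}-v_{2},\,u_{1}-u_{2}}\ge\gamma\norm{u_{1}-u_{2}}_{2}^{2}.
\end{align*}
Combining this with Cauchy--Schwarz, $\gamma\norm{u_{1}-u_{2}}_{2}^{2}\le\norm{v_{1}-v_{2}}_{2}\norm{u_{1}-u_{2}}_{2}$, gives $\norm{u_{1}-u_{2}}_{2}\le\tfrac1\gamma\norm{v_{1}-v_{2}}_{2}$, i.e. $\nabla h^{*}$ is $\tfrac1\gamma$-Lipschitz.

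Finally I would pass from the Lipschitz gradient to the smoothness inequality in the stated form: for any $v,w$,
\begin{align*}
	h^{*}(w)-h^{*}(v)-\angles{\nabla h^{*}(v),\,w-v}
	&=\int_{0}^{1}\angles{\nabla h^{*}(v+t(w-v))-\nabla h^{*}(v),\,w-v}\,dt\\
	&\le\frac{1}{2\gamma}\norm{w-v}_{2}^{2},
\end{align*}
which is exactly $\tfrac1\gamma$-smoothness.

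I expect the main obstacle to be the regularity step rather than the inequality chain: one must justify that $h^{*}$ is genuinely differentiable (not merely that the defining supremum is attained) and that its gradient coincides with the maximizer, which requires either quoting the convex-analysis theorem that the conjugate of a strictly/strongly convex closed function is differentiable, or establishing the envelope identity $\nabla h^{*}(v)=u(v)$ directly; one must also be careful about the constraint set $Q$ — handled cleanly by the $\mathbb{I}$-extension above — so that the optimality condition $v\in\partial h(u(v))$ is the correct one. Everything after that is routine.
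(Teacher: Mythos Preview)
Your argument is correct and is the standard route to this fact: unique attainment of the sup by strong concavity, identification of $\nabla h^{*}$ with the maximizer via the subdifferential of a conjugate, strong monotonicity of $\partial h$ giving the $1/\gamma$-Lipschitz bound, and then integration. The handling of the constraint set via the indicator extension is also the right move.

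There is nothing to compare against, however: the paper does not prove this proposition at all. It is stated with a citation to \cite{kakade2009duality} and used as a black box (specifically in the proof of Theorem~\ref{thm:consensus_reg_dual_convergence}). So your write-up is a correct self-contained proof of a result the paper simply imports from the literature.
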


\subsection{Algorithms and complexities for dual smoothed problems}

Let us begin with problem \eqref{prob:consensus_reg_dual}. We will apply APAPC (Algorithm 1) in the modification of paper \cite{yarmoshik2024coupled}, where it was applied to coupled constraints optimization of strongly convex smooth functions.
\begin{theorem}\label{thm:consensus_reg_dual_convergence}
	Let $\bz^*$ be the solution of \eqref{prob:consensus_reg_dual} and let $\norm{\bz^0 - \bz^*}_2^2\leq R^2$. Also assume that regularizer $G$ is strongly convex. Then APAPC requires
	\begin{align*}
		N = O\cbraces{\sqrt{\frac{R^2}{\lambda\eps} \kappa_W \kappa_A} \ln\cbraces{\frac1\eps}}
	\end{align*}
	communication rounds in order to yield $\bz^N$ such that $\norm{\bz^N - \bz^*}_2^2\leq \eps$.
\end{theorem}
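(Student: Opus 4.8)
We sketch the argument. The plan is to cast \eqref{prob:consensus_reg_dual} as an instance of the affinely constrained problem \eqref{prob:general_affine_constraints} and to solve it by APAPC (\Cref{prop:apapc}) after a regularization step (\Cref{lem:regularization_affine}). Take the variable to be $u = (\bz, \bu)$, the objective $P(\bz, \bu) = F^*_{\lambda G, \R^m}(\bz) + \angles{\bz, \bb}$, and the constraint operator $K(\bz, \bu) = \mA^\top \bz + \mW \bu$ with right-hand side $c = 0$; then \eqref{prob:consensus_reg_dual} is exactly $\min_u P(u)$ subject to $Ku = c$.

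The first step is to bound the smoothness of $P$. Since $G$ is strongly convex and $F$ is convex by \Cref{assum:f_i_convex}, the sum $F + \lambda G$ is $\lambda$-strongly convex (rescaling the modulus of $G$ to one), so by \Cref{prop:conjugate_smoothness} its Fenchel conjugate $F^*_{\lambda G, \R^m}$ is $\tfrac{1}{\lambda}$-smooth; the linear term $\angles{\bz, \bb}$ does not change this and $P$ is convex. Being independent of $\bu$, however, $P$ is not strongly convex, so \Cref{prop:apapc} does not apply as is.

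To fix this I would regularize via \Cref{lem:regularization_affine}: replace $P$ by $P_\mu(\bz, \bu) = P(\bz, \bu) + \tfrac{\mu}{2}\norm{\bz - \bz^0}_2^2$ with $\mu = \Theta(\eps / R^2)$, which is $\mu$-strongly convex and $\cbraces{\tfrac{1}{\lambda} + \mu}$-smooth, hence (for $\eps$ small enough that $\mu \le \tfrac1\lambda$) has condition number $\kappa_{P_\mu} = O\cbraces{R^2/(\lambda\eps)}$. By \Cref{lem:regularization_affine} it suffices to solve the regularized problem to accuracy $\delta = O(\eps^2)$, and by \Cref{prop:apapc} this costs
\begin{align*}
	O\cbraces{\sqrt{\kappa_{P_\mu}\,\kappa_K}\,\ln\cbraces{\tfrac{1}{\delta}}} \;=\; O\cbraces{\sqrt{\tfrac{R^2}{\lambda\eps}\,\kappa_K}\,\ln\cbraces{\tfrac{1}{\eps}}}
\end{align*}
communication rounds, where $\kappa_K = \lmax(K^\top K)/\lminp(K^\top K)$.

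The crux is the spectral estimate $\kappa_K = O(\kappa_W \kappa_A)$ for the two-block operator $K = [\mA^\top\ \ \mW]$. Heuristically, the constraint $\mA^\top\bz + \mW\bu = 0$ with $\bu$ free is equivalent to $\mA^\top\bz \in \image \mW$, i.e. $\sum_{i=1}^n A_i^\top z_i = 0$; as a constraint on $\bz$ alone this has condition number $\Theta(\kappa_A)$ (the quantity \eqref{eq:def_kappa_a} is designed to capture exactly this, up to the transposition of the $A_i$ built into the primal--dual correspondence of \Cref{lem:dual_problems}), and the extra factor $\kappa_W$ enters because the auxiliary variable $\bu$ can only implement the projection onto the consensus subspace through the gossip matrix $\mW$. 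This is precisely the configuration analyzed for the modified APAPC of \cite{yarmoshik2024coupled}, in which the two blocks $\mW$ and $\mA$ are handled separately and the effective condition number is the product $\kappa_W \kappa_A$; substituting $\kappa_K = O(\kappa_W\kappa_A)$ above gives the claimed bound $N = O\cbraces{\sqrt{\tfrac{R^2}{\lambda\eps}\kappa_W\kappa_A}\,\ln\cbraces{\tfrac{1}{\eps}}}$. One remaining subtlety: since $P$ is merely convex the unregularized dual may have non-unique solutions, so the iterate bound is read with $\bz^*$ taken as the (unique) regularized solution, or one invokes a standard quadratic-growth condition on $P$ to pass from $\norm{\bz^N - \bz_\mu^*}$ to $\norm{\bz^N - \bz^*}$.
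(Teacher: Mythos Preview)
Your proposal is correct and follows essentially the same route as the paper: establish $1/\lambda$-smoothness of $F^*_{\lambda G,\R^m}$ via \Cref{prop:conjugate_smoothness}, add a quadratic regularizer of size $\Theta(\eps/R^2)$ to obtain strong convexity, and then invoke the APAPC rate (in the coupled-constraints modification of \cite{yarmoshik2024coupled}) to pick up the factor $\kappa_W\kappa_A$. One small slip: after regularizing only in $\bz$, the function $P_\mu(\bz,\bu)$ is not $\mu$-strongly convex jointly in $(\bz,\bu)$ since it is independent of $\bu$; this is exactly why the plain \Cref{prop:apapc} does not apply and why the modified APAPC of \cite{yarmoshik2024coupled} is needed, as you correctly note a few lines later.
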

\begin{proof}
	By \Cref{prop:conjugate_smoothness}, function $F^*$ is $1/\lambda$-smooth. After that, adding a regularization term $\eps\norm{\bz}_2^2/(2R^2)$ we obtain a $(1/\lambda + \eps/R^2)$-smooth and $\eps/R^2$-strongly convex function. From \Cref{prop:apapc} we get the desired number of iterations.
\end{proof}

We now pass to problem \eqref{prob:coupled_reg_dual}.
\begin{theorem}\label{thm:coupled_reg_dual_convergence}
	Let $\bz^*$ be the solution of \eqref{prob:coupled_reg_dual} and let $\norm{\bz^0 - \bz^*}_2^2\leq R^2$. Also assume that regularizer $G$ is strongly convex. Then APAPC requires
	\begin{align*}
		N = O\cbraces{\sqrt{\frac{L_AR^2}{\lambda\eps} \kappa_W}} \ln\cbraces{\frac1\eps}
	\end{align*}
	communication rounds in order to yield $\bz^N$ such that $\norm{\bz^N - \bz^*}_2^2\leq \eps$.
\end{theorem}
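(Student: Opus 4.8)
The plan is to follow the template of the proof of \Cref{thm:consensus_reg_dual_convergence}: I will first show that the objective of \eqref{prob:coupled_reg_dual} is smooth by conjugate duality, then restore strong convexity with a small quadratic regularizer, cast the result in the form \eqref{prob:general_affine_constraints}, and finally invoke \Cref{prop:apapc}. The one genuinely new feature relative to \Cref{thm:consensus_reg_dual_convergence} is that here $\mA^\top$ appears \emph{inside} the objective instead of in the coupling constraint, so I need to track how composition with $\mA^\top$ affects the smoothness constant, and to choose the right matrix to encode the consensus constraint $\mW\bz=0$.

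For the smoothness step, since $G$ is strongly convex, $F+\lambda G$ is strongly convex on $\mQ$, so by \Cref{prop:conjugate_smoothness} the conjugate $F^*_{\lambda G,\mQ}$ is $(1/\lambda)$-smooth, exactly as in the proof of \Cref{thm:consensus_reg_dual_convergence}. The objective $P(\bz)=F^*_{\lambda G,\mQ}(\mA^\top\bz)-\angles{\bz,\bb}$ then has Hessian bounded by $\tfrac1\lambda\norm{\mA}_2^2=\tfrac1\lambda\lmax(\mA\mA^\top)$; because $\mA=\diag(A_1,\dots,A_n)$ is block diagonal, $\mA\mA^\top=\diag(A_1A_1^\top,\dots,A_nA_n^\top)$ and $\lmax(\mA\mA^\top)=\max_i\lmax(A_iA_i^\top)=L_A$, so $P$ is convex and $(L_A/\lambda)$-smooth. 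Adding $\tfrac{\eps}{2R^2}\sqn{\bz-\bz^0}$ makes it $(\eps/R^2)$-strongly convex and $(L_A/\lambda+\eps/R^2)$-smooth, so $\kappa_P=1+L_AR^2/(\lambda\eps)=O\!\cbraces{L_AR^2/(\lambda\eps)}$. For the constraint, I rewrite $\mW\bz=0$ as $\mW^{1/2}\bz=0$ with $\mW^{1/2}=W^{1/2}\otimes\mI_d$, matching \eqref{prob:general_affine_constraints} with $K=\mW^{1/2}$ and $c=0\in\image K$; then $K^\top K=\mW$, which has the same nonzero spectrum as $W$, so $\kappa_K=\lmax(\mW)/\lminp(\mW)=L_W/\mu_W=\kappa_W$ (and line~\ref{alg:apapc:line:z} of \Cref{alg:apapc} only multiplies by $K^\top K=\mW$, a single gossip step, so the method stays decentralized). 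Plugging $\kappa_P=O(L_AR^2/(\lambda\eps))$ and $\kappa_K=\kappa_W$ into \Cref{prop:apapc} (run to accuracy $\sqrt\eps$, which only rescales the logarithm) produces $\bz^N$ with $\sqn{\bz^N-\bz^*}\le\eps$ after $O(\sqrt{\kappa_P\kappa_K}\ln(1/\eps))=O\!\cbraces{\sqrt{\tfrac{L_AR^2}{\lambda\eps}\kappa_W}\,\ln(1/\eps)}$ communication rounds.

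The delicate part is getting the sharp dependence on the spectral quantities rather than a weaker one. Two checks matter: one must use the symmetric square root $\mW^{1/2}$, not $\mW$ itself, to obtain $\kappa_K=\kappa_W$ instead of $\kappa_W^2$; and one must verify that composing with the block-diagonal $\mA$ contributes exactly $L_A=\max_i\lmax(A_iA_i^\top)$ and never the averaged quantity $\mu_A$ — which is precisely why no $\kappa_A$ factor appears here, in contrast with \Cref{thm:consensus_reg_dual_convergence}, where $\mA^\top$ sits in the coupling constraint. The remaining step — bounding the gap between the regularized optimum and the solution $\bz^*$ of the unregularized \eqref{prob:coupled_reg_dual}, and reading iterate accuracy as the $\eps$-suboptimality intended in the statement — is routine and handled exactly as in \Cref{lem:regularization_affine}.
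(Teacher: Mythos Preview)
Your proposal is correct and follows exactly the approach the paper intends: the paper's proof of \Cref{thm:coupled_reg_dual_convergence} is literally ``The proof is analogical to proof of \Cref{thm:consensus_reg_dual_convergence}'', and you carry out precisely that analogy, correctly identifying that composing with $\mA^\top$ yields smoothness constant $L_A/\lambda$ and that taking $K=\mW^{1/2}$ gives $\kappa_K=\kappa_W$. If anything, you supply more detail than the paper does.
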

\begin{proof}
	The proof is analogical to proof of \Cref{thm:consensus_reg_dual_convergence}.
\end{proof}

\subsection{Examples}

\noindent\textbf{Decentralized basis pursuit}.
Consider a special case of \eqref{prob:coupled_reg_dual} with $F(\bx) = \norm{\bx}_1,~ Q = \R^m,~ G(\bx) = \norm{\bx}_2^2 / 2$.
\begin{align*}
	\min_{x, y}~ &\norm{\bx}_1 + \frac\lambda 2 \norm{\bx}_2^2 \\
	\text{s.t. } &\mA\bx + \mW\by = \bb
\end{align*}
Fenchel conjugate $F_{\lambda G, Q}^*(\by)$ is computed as
\begin{align*}
	F_{\lambda G, Q}^*(\by) 
	&= \max_\bx \sbraces{\angles{\bx, \by} - \norm{\bx}_1 - \frac\lambda 2 \norm{\bx}_2^2} \\
	&= \frac{\norm{\by}_2^2}{2\lambda} - \lambda \proxv_{\norm{\cdot}_1/\lambda} \cbraces{\frac\by\lambda} \\
	&= \frac{\norm{\by}_2^2}{2\lambda} - \frac1\lambda \norm{\max\cbraces{|\by| - 1, 0}}_1 + \frac{1}{2\lambda} \norm{\by - \sign\by \odot \max\cbraces{|\by| - 1, 0}}_2^2.
\end{align*}
Problem \eqref{prob:coupled_reg_dual} takes the form
\begin{align*}
	\min_\bz~ &\Bigg[\frac{\norm{\mA^\top\bz}_2^2}{2} - \norm{\max\cbraces{|\mA^\top \bz| - 1, 0}}_1 \\
	&+ \frac{1}{2}\norm{\mA^\top\bz - \sign(\mA^\top\bz)\odot \max(|\mA^\top\bz| - 1, 0)}_2^2 - \lambda\angles{\bz, \bb} \Bigg] \\
	\text{s.t. } &\mW\bz = 0
\end{align*}

\vspace{0.3cm}
\noindent\textbf{Decentralized basis pursuit via double duality}. In the previous example, we regularized the decentralized basis pursuit problem and then took the dual. Now we first take the dual and then regularize it.
\begin{align*}
	\min_\bz~ &\angles{\bz, \bb} + \mathbb{I}(\norm{\mA^\top\bz}_\infty\leq 1) + \frac{\lambda}{2}\norm{\bz}_2^2 \\
	\text{s.t. } &\mW\bz = 0
\end{align*}
The problem above is an instance of \eqref{prob:consensus_reg_dual} with $F(\bz) = \angles{\bz, \bb} + \mathbb{I}(\norm{\mA^\top\bz}_\infty\leq 1)$ and $F(\bz) = \norm{\bz}_2^2/2$. Let us take its dual one more time. We get
\begin{align*}
	F_{\lambda G}^*(\bx) = \frac{\norm{\bx - \bb}_2^2}{2\lambda} - \lambda \sbraces{\one_m^\top \cdot \max\cbraces{\frac{|\bx - \bb|}{\lambda}, 0}}^2.
\end{align*}
Therefore, the second dual takes the form
\begin{align*}
	\min_\by~ &\frac{\norm{\mA^\top\by - \bb}_2^2}{2\lambda} - \lambda \sbraces{\one_m^\top \cdot \max\cbraces{\frac{|\mA^\top\by - \bb|}{\lambda}, 0}}^2 \\
	\text{s.t. } &\mW\bu + \mA^\top\by = 0
\end{align*}

\vspace{0.3cm}
\noindent\textbf{Decentralized mean squared error minimization}.
Consider an instance of consensus optimization with $F(\bx) = \norm{\bx}_1$ and $G(\bx) = \norm{\bx}_2^2/2$.
\begin{align*}
	\min_{\bx} &\norm{\by}_1 + \frac\lambda 2\norm{\by}_2^2 \\
	\text{s.t. } &\mW\bx = 0,~ \by = \mA\bx - \bb
\end{align*}
Its dual is an instance \eqref{prob:coupled_reg_dual} writes as
\begin{align*}
	\min_{\bz,\bu}~ &\frac{\norm{\bz}_2^2}{2} - \norm{\max\cbraces{|\bz| - 1, 0}}_1 + \frac{1}{2}\norm{\bz - \sign(\bz)\odot \max(|\bz| - 1, 0)}_2^2 + \lambda\angles{\bz, \bb} \\
	\text{s.t. } &\mW\bu + \mA^\top\bz = 0
\end{align*}

\vspace{0.3cm}
\noindent\textbf{Decentralized mean squared error minimization via double duality}.
Consider the problem similar to the previous example but first take its dual and then regularize. The dual with regularization writes as
\begin{align*}
	\min_{\bz\in\R^m}~ &\angles{\bz, \bb} + \mathbb{I}(\norm{\bz}_\infty\leq 1) + \frac{\lambda}{2}\norm{\bz}_2^2 \\
	\text{s.t. } &\mA^\top\bz + \mW\bu = 0
\end{align*}
The problem above has type \eqref{prob:coupled_reg} with $F(\bz) = \angles{\bz, \bb} + \mathbb{I}(\norm{\bz}_\infty\leq 1)$, $G(\bz) = \norm{\bz}_2^2/2$. Taking the dual for the second time, we obtain
\begin{align*}
	\min_\bx~ &\frac{\norm{\mA\bx - \bb}_2^2}{2\lambda} - \lambda\sbraces{\one_m^\top\cdot \max\cbraces{0, \frac{|\mA\bx - \bb|}{\lambda}}}^2 \\
	\text{s.t. } &\mW\bx = 0
\end{align*}

\section{Conclusion}\label{sec:conclusion}

In this paper we tried to systemize the dual approach to decentralized optimization. We considered two problem classes: consensus optimization of linear models and coupled constraints optimization. We showed that consensus and coupled constraints problems are dual to each other. We also deduced the dual problem formulations of regularized initial problems and analyzed how known decentralized optimization methods work on these problems. Finally, we illustrated our approach on decentralized basis pursuit problem and decentralized mean absolute error optimization.

\section*{Acknowledgments}\label{sec:acknowledgments}

The research was supported by Russian Science Foundation (project No. 23-11-00229), https://rscf.ru/en/project/23-11-00229/.

\bibliographystyle{abbrv}
\bibliography{references}

@article{konecny2016federated,
	title={Federated learning: Strategies for improving communication efficiency},
	author={Kone{\v{c}}n{\'y}, Jakub and McMahan, H Brendan and Yu, Felix X and Richt{\'a}rik, Peter and Suresh, Ananda Theertha and Bacon, Dave},
	journal={arXiv preprint arXiv:1610.05492},
	year={2016}
}

@inproceedings{ren2006consensus,
	title={Consensus based formation control strategies for multi-vehicle systems},
	author={Ren, Wei},
	booktitle={2006 American Control Conference},
	pages={6--pp},
	year={2006},
	organization={IEEE}
}

@inproceedings{ram2009distributed,
	title={Distributed non-autonomous power control through distributed convex optimization},
	author={Ram, Sundhar Srinivasan and Veeravalli, Venugopal V and Nedic, Angelia},
	booktitle={IEEE INFOCOM 2009},
	pages={3001--3005},
	year={2009},
	organization={IEEE}
}

@article{uribe2017optimal,
	title={Optimal algorithms for distributed optimization},
	author={Uribe, C{\'e}sar A and Lee, Soomin and Gasnikov, Alexander and Nedi{\'c}, Angelia},
	journal={arXiv preprint arXiv:1712.00232},
	year={2017}
}

@Article{lan2016gradient,
	author="Lan, Guanghui",
	title="Gradient sliding for composite optimization",
	journal="Mathematical Programming",
	year="2016",
	month="Sep",
	day="01",
	volume="159",
	number="1",
	pages="201--235",
	issn="1436-4646",
	doi="10.1007/s10107-015-0955-5",
	url="https://doi.org/10.1007/s10107-015-0955-5"
}

@inproceedings{scaman2017optimal,
	title={Optimal algorithms for smooth and strongly convex distributed optimization in networks},
	author={Scaman, Kevin and Bach, Francis and Bubeck, S{\'e}bastien and Lee, Yin Tat and Massouli{\'e}, Laurent},
	booktitle={Proceedings of the 34th International Conference on Machine Learning-Volume 70},
	pages={3027--3036},
	year={2017},
	organization={JMLR. org}
}

@Article{nesterov2005smooth,
	author="Nesterov, Yurii",
	title="Smooth minimization of non-smooth functions",
	journal="Mathematical Programming",
	year="2005",
	volume="103",
	number="1",
	pages="127--152",
	abstract="In this paper we propose a new approach for constructing efficient schemes for non-smooth convex optimization. It is based on a special smoothing technique, which can be applied to functions with explicit max-structure. Our approach can be considered as an alternative to black-box minimization. From the viewpoint of efficiency estimates, we manage to improve the traditional bounds on the number of iterations of the gradient schemes from                                    keeping basically the complexity of each iteration unchanged.",
	issn="1436-4646",
	doi="10.1007/s10107-004-0552-5",
	url="http://dx.doi.org/10.1007/s10107-004-0552-5"
}

@article{kakade2009duality,
	title={On the duality of strong convexity and strong smoothness: Learning applications and matrix regularization},
	author={Kakade, Sham and Shalev-Shwartz, Shai and Tewari, Ambuj},
	journal={Unpublished Manuscript, http://ttic. uchicago. edu/shai/papers/KakadeShalevTewari09.pdf},
	volume={2},
	number={1},
	year={2009}
}

@article{kovalev2020optimal,
	title={Optimal and practical algorithms for smooth and strongly convex decentralized optimization},
	author={Kovalev, Dmitry and Salim, Adil and Richt{\'a}rik, Peter},
	journal={Advances in Neural Information Processing Systems},
	volume={33},
	year={2020}
}

@inproceedings{scaman2018optimal,
	title={Optimal algorithms for non-smooth distributed optimization in networks},
	author={Scaman, Kevin and Bach, Francis and Bubeck, S{\'e}bastien and Massouli{\'e}, Laurent and Lee, Yin Tat},
	booktitle={Advances in Neural Information Processing Systems},
	pages={2740--2749},
	year={2018}
}

@article{li2021accelerated,
	title={Accelerated gradient tracking over time-varying graphs for decentralized optimization},
	author={Li, Huan and Lin, Zhouchen},
	journal={arXiv preprint arXiv:2104.02596},
	year={2021}
}

@article{kovalev2021lower,
	title={Lower bounds and optimal algorithms for smooth and strongly convex decentralized optimization over time-varying networks},
	author={Kovalev, Dmitry and Gasanov, Elnur and Gasnikov, Alexander and Richtarik, Peter},
	journal={Advances in Neural Information Processing Systems},
	volume={34},
	year={2021}
}

@inproceedings{kovalev2021adom,
	title={ADOM: Accelerated decentralized optimization method for time-varying networks},
	author={Kovalev, Dmitry and Shulgin, Egor and Richt{\'a}rik, Peter and Rogozin, Alexander V and Gasnikov, Alexander},
	booktitle={International Conference on Machine Learning},
	pages={5784--5793},
	year={2021},
	organization={PMLR}
}

@article{gan2012optimal,
	title={Optimal decentralized protocol for electric vehicle charging},
	author={Gan, Lingwen and Topcu, Ufuk and Low, Steven H},
	journal={IEEE Transactions on Power Systems},
	volume={28},
	number={2},
	pages={940--951},
	year={2012},
	publisher={IEEE}
}

@book{ren2008distributed,
	title={Distributed consensus in multi-vehicle cooperative control},
	author={Ren, Wei and Beard, Randal W},
	volume={27},
	number={2},
	year={2008},
	publisher={Springer}
}

@article{li2018accelerated,
	title={Accelerated convergence algorithm for distributed constrained optimization under time-varying general directed graphs},
	author={Li, Huaqing and L{\"u}, Qingguo and Liao, Xiaofeng and Huang, Tingwen},
	journal={IEEE Transactions on Systems, Man, and Cybernetics: Systems},
	volume={50},
	number={7},
	pages={2612--2622},
	year={2018},
	publisher={IEEE}
}

@article{doan2017distributed,
	title={Distributed resource allocation on dynamic networks in quadratic time},
	author={Doan, Thinh T and Olshevsky, Alex},
	journal={Systems \& Control Letters},
	volume={99},
	pages={57--63},
	year={2017},
	publisher={Elsevier}
}

@inproceedings{salim2022optimal,
	title={An optimal algorithm for strongly convex minimization under affine constraints},
	author={Salim, Adil and Condat, Laurent and Kovalev, Dmitry and Richt{\'a}rik, Peter},
	booktitle={International conference on artificial intelligence and statistics},
	pages={4482--4498},
	year={2022},
	organization={PMLR}
}

@article{falsone2020tracking,
	title={Tracking-ADMM for distributed constraint-coupled optimization},
	author={Falsone, Alessandro and Notarnicola, Ivano and Notarstefano, Giuseppe and Prandini, Maria},
	journal={Automatica},
	volume={117},
	pages={108962},
	year={2020},
	publisher={Elsevier}
}

@article{wu2022distributed,
	title={Distributed optimization with coupling constraints},
	author={Wu, Xuyang and Wang, He and Lu, Jie},
	journal={IEEE Transactions on Automatic Control},
	volume={68},
	number={3},
	pages={1847--1854},
	year={2022},
	publisher={IEEE}
}

@article{chang2016proximal,
	title={A proximal dual consensus ADMM method for multi-agent constrained optimization},
	author={Chang, Tsung-Hui},
	journal={IEEE Transactions on Signal Processing},
	volume={64},
	number={14},
	pages={3719--3734},
	year={2016},
	publisher={IEEE}
}

@article{yarmoshik2024coupled,
	title={Decentralized Optimization with Coupled Constraints},
	author={Yarmoshik, Demyan and Rogozin, Alexander and Kiselev, Nikita and Dorin, Daniil and Gasnikov, Alexander and Kovalev, Dmitry},
	journal={arXiv preprint arXiv:2407.02020},
	year={2024}
}
\end{document}